\newtheorem{defi}{Definition}[section]
\newtheorem{teo}{Theorem}[section]
\newtheorem{remark}{Remark}[section]
\begin{document}
	
	\title[Subelliptic Wave Equations with Log-Lipschitz coefficients]
	{Subelliptic Wave Equations With Log-Lipschitz Propagation Speeds  }
	
	\author[Carlos A.  Rodriguez T.]{ Carlos A. Rodriguez T.}
	\address{Department of Mathematics\\
		Universidad de los Andes\\
		Colombia}
	\email{ca.rodriguez14@uniandes.edu.co}
	\author[Michael Ruzhansky]{Michael Ruzhansky}
	\address{Department of Mathematics: Analysis, Logic and Discrete Mathematics, Ghent University, Belgium \\ and
		School of Mathematical Sciences, Queen Mary University of London, United Kingdom
	}
	\email{michael.ruzhansky@ugent.be}
	\thanks{ The second author was supported by the EPSRC Grant EP/R003025/1, by the Leverhulme Research Grant RPG-2017-151, and by the FWO Odysseus grant G.0H94.18N: Analysis and Partial Differential Equations.}

	\subjclass[2020 MSC]{35G10; 35L30; 22E30}
	
	\keywords{ Fourier Analysis, Lie Groups, Graded Lie Groups, Wave Equation. }

	\date{\today}
	\begin{abstract}
		In this paper we study the Cauchy problem for the wave equations for sums of squares of left invariant vector fields on compact Lie groups and also for hypoelliptic homogeneous left-invariant differential operators on graded Lie groups (the positive Rockland operators), when the time-dependent propagation speed satisfies a Log-Lipschitz condition. We prove the well-posedness in the  associated Sobolev spaces  exhibiting a finite loss of regularity with respect to the initial data, which is not true when the propagation speed is a ${\rm H\ddot{o}lder}$ function. We also indicate an extension to general Hilbert spaces. In the special case of the Laplacian on $\mathbb R^n$, the results boil down to the celebrated result of Colombini-De Giorgi and Spagnolo.
		
	\end{abstract}
	
	\maketitle
	\tableofcontents
	
	\section {Introduction}\label{introduction}
	In this paper we study the well-posedness of a Cauchy problem in two settings, on compact Lie groups and on graded Lie groups. In Section \ref{compact lie group case} we deal with  the problem

	\begin{equation}\label{Cprob1}
	\begin{cases} 
	\partial^2 _t u(t,x)-a(t)\mathcal{L}u(t,x)&=0,\quad \quad (t,x)\in [0,T]\times G, \\
	u(0,x)&=u_0(x), \quad \quad x\in G,\\ 
	\partial_tu(0,x)&= u_1(x), \quad \quad x\in G; 
	\end{cases}
	\end{equation}

	where $\mathcal{L}=X_1^2+X_2^2+...+X_k^2$, $1\leq k\leq  {\rm dim}(G)=n$, is a second order operator which is the sum of squares of elements of the Lie algebra of $G$, namely $X_1$,...,$X_k$, satisfying H${\rm{\ddot{o}}}$rmander condition  of order $l\in \mathbb{N}$. The coefficient function $a:[0,T]\to \mathbb{R}$ is a Log-Lipschitz function, i.e. a function that satisfies
	\begin{equation}\label{loglip}
	|a(t)-a(s)|\leq C|t-s||\log(t-s)|,
	\end{equation}
	for some constant $C>0$ and for all
	$t,s\in[0,T]$.
	We also assume that $a(t)>a_0>0$. 
	
	In   \cite[Theorem 2.3]{GRwave}, it was shown that if $a(t)\geq a_0>0$ and also $a\in C^{\alpha}([0,T])$ is $\rm{H\ddot{o}lder}$ with index $0<\alpha<1 $,  then the Problem \eqref{Cprob1} has a unique solution $u\in C^2([0,T],\gamma^s_{\mathcal{L}}(G))$ provided 
	that 
	\begin{center}
		$u_0,u_1\in \gamma^s_{\mathcal{L}}(G)$ and $1\leq s<1+\frac{\alpha}{1-\alpha},$
	\end{center}
	where $\gamma^s_{\mathcal{L}}(G)$ are the Gevrey spaces of $G$, based on the sub-Laplacian $\mathcal{L}$.
	
	In our work we assume the Log-Lipschitz condition  on the coefficient $a(t)$ which is stronger than the ${\rm H\ddot{o}lder}$ condition  $C^\alpha([0,T])$ with $0<\alpha<1 $, since any Log-Lipschitz function satisfying \eqref{loglip} belongs also to any $C^\alpha([0,T])$ with $0<\alpha<1 $. 
	Nevertheless, there is a physical motivation to study Log-Lipschitz-type functions since they appear in relation to the well-posedness of the Navier-Stokes equations. For instance, consider the solution $u=u(t,x)$ for the problem analysed by Hantaek Bae and Marco Cannone in \cite{HC}, to the problem
	
	$$\begin{cases}
	\partial_t u- \Delta u+(u\cdot\nabla)u+\nabla p&=0,\\
	\nabla\cdot u &=0,\\
	u(0,x)&= u_0(x),
	\end{cases}$$
	
	for $x\in \mathbb{R}^3$, where $u(t,x)$ is the velocity vector field, and $p(t,x)$ is the  scalar pressure  function.
	
	The authors, in  \cite[Theorem 1.1]{HC}, establish  the existence of some $\epsilon>0$ such that  for all $u_0\in H^{\frac{1}{2}}$ with $\|u_0\|_{H^{\frac{1}{2}}}<\epsilon$, there exists a global in time solution $u$ satisfying the Log-Lipschitz  regularity estimate  
	$$\|u\|_{LL_\beta}\leq C_\beta\left(\|u_0\|_{L^1}+\|u_0\|_{H^{\frac{1}{2}}}\right),$$
	where $\displaystyle \|u\|_{LL_\beta}:=\int_0^\infty\sup_{|x-y|<\frac{1}{2}}\frac{|f(t,x)-f(t,y)|}{|x-y|(-|\log|x-y|)^{\beta}}dt,$ for $\beta>0$.

	If we define $\mathcal{H}^s_{\mathcal{L}}(G):=\{u: \|(I-\mathcal{L})^{\frac{s}{2}}u\|_{L^2(G)}<\infty\}$, we prove the following 
	
	\begin{teo}\label{teop}
		Let $a:[0,T]\to \mathbb{R}$ be a Log-Lipschitz function such that $a(t)\geq a_0>0$. Suppose  $(u_0,u_1)\in H^{s}_{\mathcal{L}}(G)\times H^{s-1}_{\mathcal{L}}(G)$ for some $\nu\in \mathbb{R}$. Then the Cauchy problem \eqref{Cprob1} has a unique solution satisfying
		$$ \|u(t,\cdot)\|_{H^{s-\frac{\delta}{2} T}_{\mathcal{L}}}^2+\|\partial_t u(t,\cdot)\|_{H^{s-\frac{\delta}{2} T-1}_{\mathcal{L}}}^2\leq C( \|u_0\|_{H^{s}_{\mathcal{L}}}^2+ \|u_1\|_{H^{s-1}_{\mathcal{L}}}^2),$$
		for some $C, \delta >0$ independent of $u_0, u_1,$ and $t\in [0,T]$.
	\end{teo}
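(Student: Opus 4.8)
The plan is to pass to the Fourier/spectral side, reduce \eqref{Cprob1} to a parameter-dependent family of scalar second-order ODEs, and run a Colombini--De Giorgi--Spagnolo energy argument with a \emph{frequency-dependent} regularization of the coefficient $a$. Since $\mathcal L$ is self-adjoint on $L^2(G)$ and hypoelliptic with compact resolvent, it has a discrete spectrum; let $\{e_j\}_{j\in\mathbb N}$ be an $L^2(G)$-orthonormal eigenbasis with $-\mathcal L e_j=\lambda_j e_j$, $\lambda_j\ge0$. Writing $u(t,x)=\sum_j u_j(t)e_j(x)$, the problem decouples into
$$u_j''(t)+a(t)\lambda_j\,u_j(t)=0,\qquad u_j(0)=(u_0,e_j)_{L^2},\quad u_j'(0)=(u_1,e_j)_{L^2},$$
with $\|v\|_{\mathcal H^{\sigma}_{\mathcal L}}^2=\sum_j(1+\lambda_j)^{\sigma}|v_j|^2$; it therefore suffices to bound $|u_j(t)|,|u_j'(t)|$ with the correct dependence on $\lambda_j$ and then sum. (One may equally use the matrix-valued group Fourier transform and diagonalize the self-adjoint symbol of $\mathcal L$; nothing changes.)

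Extend $a$ to $\mathbb R$ keeping both \eqref{loglip} and $a\ge a_0$ (a standard even/constant extension), fix a mollifier $\rho$, and set $a_\varepsilon=a*\rho_\varepsilon$, so that $a_0\le a_\varepsilon\le\|a\|_\infty$ and \eqref{loglip} yields the two basic bounds $|a(t)-a_\varepsilon(t)|\le C\varepsilon|\log\varepsilon|$ and $|a_\varepsilon'(t)|\le C|\log\varepsilon|$ for $0<\varepsilon\le\varepsilon_0$. For $\lambda_j\ge1$ choose $\varepsilon_j:=\lambda_j^{-1/2}$ and put
$$E_j(t):=|u_j'(t)|^2+a_{\varepsilon_j}(t)\,\lambda_j\,|u_j(t)|^2,$$
which is comparable to $|u_j'(t)|^2+\lambda_j|u_j(t)|^2$ with constants depending only on $a_0,\|a\|_\infty$. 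Differentiating and using $u_j''=-a\lambda_j u_j$,
$$E_j'(t)=2\lambda_j\big(a_{\varepsilon_j}(t)-a(t)\big)\mathrm{Re}\big(u_j'(t)\overline{u_j(t)}\big)+a_{\varepsilon_j}'(t)\,\lambda_j\,|u_j(t)|^2.$$
Using $\lambda_j^{1/2}|u_j'||u_j|\le\tfrac12(|u_j'|^2+\lambda_j|u_j|^2)\lesssim E_j$ and $\lambda_j|u_j|^2\le a_0^{-1}E_j$, and inserting the mollifier bounds with $\varepsilon_j=\lambda_j^{-1/2}$ (so $\varepsilon_j|\log\varepsilon_j|=\tfrac12\lambda_j^{-1/2}\log\lambda_j$ and $|\log\varepsilon_j|=\tfrac12\log\lambda_j$), both terms are $\lesssim(\log\lambda_j)E_j$. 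Hence $E_j'(t)\le\kappa(\log\lambda_j)E_j(t)$ with $\kappa=\kappa(C,a_0,\|a\|_\infty)$, and Gronwall gives
$$E_j(t)\le E_j(0)\,e^{\kappa t\log\lambda_j}=\lambda_j^{\kappa t}E_j(0)\le\lambda_j^{\kappa T}E_j(0)\qquad(\lambda_j\ge1),$$
while for the finitely many $\lambda_j<1$ the ODE trivially gives $|u_j(t)|^2+|u_j'(t)|^2\lesssim_T|u_j(0)|^2+|u_j'(0)|^2$.

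Now set $\delta:=2\kappa$. For $\lambda_j\ge1$, $E_j(t)\gtrsim|u_j'(t)|^2+\lambda_j|u_j(t)|^2$ and the Gronwall bound give, after multiplying by $(1+\lambda_j)^{s-\delta t/2-1}$,
$$(1+\lambda_j)^{s-\frac{\delta}{2}t}\Big(|u_j(t)|^2+\tfrac{1}{1+\lambda_j}|u_j'(t)|^2\Big)\lesssim(1+\lambda_j)^{s-\frac{\delta}{2}t-1}\lambda_j^{\kappa t}\big(|u_j'(0)|^2+\lambda_j|u_j(0)|^2\big),$$
and since $(1+\lambda_j)^{\kappa t-\frac{\delta}{2}t}\le1$, the right-hand side is $\lesssim(1+\lambda_j)^{s-1}|u_j'(0)|^2+(1+\lambda_j)^{s}|u_j(0)|^2$. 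Summing in $j$ (adding the bounded low-frequency contribution) yields
$$\|u(t,\cdot)\|_{\mathcal H^{s-\frac{\delta}{2}t}_{\mathcal L}}^2+\|\partial_t u(t,\cdot)\|_{\mathcal H^{s-\frac{\delta}{2}t-1}_{\mathcal L}}^2\lesssim\|u_0\|_{\mathcal H^{s}_{\mathcal L}}^2+\|u_1\|_{\mathcal H^{s-1}_{\mathcal L}}^2,$$
and since $s-\tfrac{\delta}{2}t\ge s-\tfrac{\delta}{2}T$ and $\mathcal H^{\sigma'}_{\mathcal L}\hookrightarrow\mathcal H^{\sigma''}_{\mathcal L}$ for $\sigma'\ge\sigma''$, this gives the stated inequality. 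Existence follows by construction: each linear ODE has a unique $C^2$ solution $u_j$, the series $\sum_j u_j(t)e_j$ converges in $C([0,T],\mathcal H^{s-\frac{\delta}{2}T}_{\mathcal L})\cap C^1([0,T],\mathcal H^{s-\frac{\delta}{2}T-1}_{\mathcal L})$ by the bounds above (a Cauchy-tail estimate), and the equation upgrades this to a $C^2$ solution into $\mathcal H^{s-\frac{\delta}{2}T-2}_{\mathcal L}$; uniqueness is immediate since the decoupled ODEs have unique solutions (equivalently, apply the energy estimate to the difference of two solutions with zero data).

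The main obstacle is the differential-inequality step: one must calibrate the mollification scale to the frequency, $\varepsilon_j\sim\lambda_j^{-1/2}$, so that the Log-Lipschitz modulus $\varepsilon|\log\varepsilon|$ produces \emph{exactly} a growth rate $\log\lambda_j$ in $E_j$, hence a polynomial (finite) loss $\lambda_j^{\kappa t}$; for a merely H\"older coefficient $a\in C^\alpha$ the analogous optimization gives only an $\varepsilon^{-\alpha}$-type growth and an exponential, Gevrey-type loss, which explains why a Sobolev result fails in that regime. A secondary, routine point is the extension of $a$ past the endpoints of $[0,T]$ preserving \eqref{loglip} and $a_\varepsilon\ge a_0$, and the choice of $\delta=2\kappa$ depending only on the Log-Lipschitz constant, $a_0$, and $\|a\|_\infty$.
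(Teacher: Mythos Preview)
Your proof is correct and follows the classical Colombini--De Giorgi--Spagnolo strategy, but the implementation differs from the paper's. The paper also reduces to scalar ODEs via the group Fourier transform and diagonalisation of the symbol of $-\mathcal L$, but then rewrites each scalar equation as a first-order $2\times 2$ system, introduces a symmetriser matrix $H(t)$ built from the mollified \emph{square root} $\pm\sqrt{a}*\psi_\varepsilon$, and looks for the solution in the form $V=(\det H)^{-1}e^{-\rho(t)\log|\xi|_\nu}H\,W$; the energy estimate comes from bounding three separate matrix quantities $(\partial_t\det H)/\det H$, $\|H^{-1}\partial_tH\|$, and $\|H^{-1}AH-(H^{-1}AH)^*\|$. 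You instead work directly with the scalar approximate energy $E_j=|u_j'|^2+a_{\varepsilon_j}\lambda_j|u_j|^2$, mollifying $a$ itself, and close with a single differential inequality $E_j'\le\kappa(\log\lambda_j)E_j$. Both arguments hinge on the same frequency-dependent regularisation scale (your $\varepsilon_j=\lambda_j^{-1/2}$ is exactly the paper's $\epsilon=|\xi|_\nu^{-1}$, since $|\xi|_\nu^2=\lambda_j$) and the same Log-Lipschitz mollifier bounds. Your route is more elementary and self-contained for scalar equations; the paper's symmetriser approach is the one that ports directly to genuine first-order hyperbolic \emph{systems} and to the Rockland setting of the next section without modification.
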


	For the proof of  Theorem \ref{teop} we use the techniques developed in \cite{GRwave} and  \cite{GR}. As in those papers the global Fourier analysis on compact Lie groups introduced in \cite{RT} plays a key role in our work. This and classical results of well-posedness of ordinary first order differential equations will allow us to proof our result.

	In Section \ref{graded lie group case} we study the problem
	\begin{equation}\label{Cprob2}
	\begin{cases}
	\partial^2 _t u(t,x)+a(t)\mathcal{R}u(t,x)&=0,\quad \quad
	(t,x)\in [0,T]\times G, \\
	u(0,x)&=u_0(x),\quad \quad x\in G, \\
	\partial_tu(0,x)&= u_1(x),\quad \quad x\in G,
	\end{cases}
	\end{equation}
	where $G$ is a graded Lie group and $\mathcal{R}$ is a  positive self-adjoint Rockland operator. To analyse the well-posedness of this problem we follow the lines   in \cite{RTar}, and also   in \cite{NR}. The reader should note that in the case of $G = \mathbb{R}$
	and $\mathcal{R} =-\Delta $ , we are dealing with the classical wave equation 
	with the time-dependent propagation speed $a(t)$.  In  \cite{CSM} the authors study the Cauchy problem for strictly hyperbolic operators
	with low regularity coefficients in any space dimension $n\geq 1$. In particular  the
	coefficients of the differential operator are supposed  to be Log-Zygmund continuous in time and Log-Lipschitz continuous in
	space.
	
	The well-posedness results for H${\rm \ddot{o}}$lder
	regular functions $a(t)$  have been obtained by Colombini, de Giorgi and Spagnolo in \cite{CDS}. Moreover, it has been shown by Colombini and Spagnolo in \cite{CS} that already in the case of $G = \mathbb{R}$, 
	the Cauchy problem \eqref{Cprob2} does not have to be well-posed in $C^\infty(\mathbb{R})$.
	
	The Fourier analysis in the case of graded Lie a groups can be found in \cite{FRNILP} and references therein. Also a treatment of $L^p$ estimates for pseudo-differential operators on graded Lie groups can be found in \cite{CDR}.
	The technique used is quite similar to the case of compact Lie groups, but with some differences.

	\section{Compact Lie groups}\label{compact lie group case}
	For a compact Lie group $G$, we denote by $\widehat{G}$ the unitary dual of $G$, consisting of equivalence classes  $[\xi]$ of continuous irreducible unitary representations $\xi:G\to \mathbb{C}^{d_\xi\times d_\xi}$.
	Let $f\in C^\infty(G)$ be a smooth function, we define its Fourier coefficient at $[\xi]\in \widehat{G}$ by $$\widehat{f}(\xi):=\int_{G}f(x)\xi(x)^{*}dx.$$
	Then we have that $$f(x)=\sum_{[\xi]\in \widehat{G}} d_{\xi} {\rm Tr}(\xi(x)\widehat{f}(\xi)),$$
	and $$\|f\|_{L^2(G)}=\left(\sum_{[\xi]\in \widehat{G}} d_{\xi} \|\widehat{f}(\xi)\|^2_{HS}\right)^2,$$ 
	where $\|\widehat{f}(\xi)\|_{HS}:={\rm{Tr}}(\widehat{f}(\xi)\widehat{f}(\xi)^{*})^{\frac{1}{2}}$ is the Hilbert-Schmidt norm.

	For a linear operator $$T:C^\infty(G)\to C^\infty(G),$$ define its global symbol by $$\sigma_T(x,\xi):=\xi^{*}(x)(T\xi)(x)\in \mathbb{C}^{d_\xi\times d_\xi},$$ where  $$[(T\xi)(x)]_{ij}:=T(\xi(x)_{ij}).$$
	
	Using this symbol the following global quantization holds:
	$$Tf(x)=\sum_{[\xi]\in \widehat{G}} d_\xi {\rm Tr}(\xi(x)\sigma_T(x,\xi)\widehat{f}(\xi)).$$
	
	The corresponding symbolic calculus was introduced in \cite{RT}.  Since $\mathcal{L}$ is formally self-adjoint, the symbol of the operator can be diagonalised by a choice of a suitable basis in representation spaces, and its symbol has  constant entries with respect to $x$-variable,  $$\sigma_{-\mathcal{L}}(\xi)={\rm Diag}(\nu_1^2(\xi),..., \nu_{d_\xi}^2(\xi)).$$
	
	Furthermore, the global Fourier analysis permits to characterise the spaces of smooth functions $C^\infty(G)$, the Gevrey spaces associated to the operator $\mathcal{L}$  denoted by $\gamma_{\mathcal{L}}^s(G)$, and the  Sobolev spaces  $\mathcal{H}_{\mathcal{L}}^s(G)$  by:
	
	\begin{equation}
	f\in C^\infty(G) 
	\iff \forall N \, \exists C_N \, {\rm such \, that}\; \|\widehat{f}(\xi)\|_{HS}\leq C_N \langle \xi\rangle^{-N}\; \forall [\xi]\in \widehat{G},
	\end{equation}
	
	\begin{equation}
	f\in \gamma_{\mathcal{L}}^s(G) \iff \exists A>0:\sum_{[\xi]\in \widehat{G}}d_{\xi}\sum_{j=1}^{d_\xi}e^{A\nu_j(\xi)^{\frac{1}{s}}} \left(\sum_{m=1}^{d_\xi}|\widehat{f}(\xi)_{jm}|^2\right)<\infty,
	\end{equation}
	
	\begin{equation}
	f\in \mathcal{H}_{\mathcal{L}}^s(G) \iff \sum_{[\xi]\in \widehat{G}}d_{\xi}\sum_{j=1}^{d_\xi}\left( 1+\nu _j ^2(\xi)\right)^s \left(\sum_{m=1}^{d_\xi}|\widehat{f}(\xi)_{jm}|^2\right)<\infty .
	\end{equation}

	Now we proceed to study the well-posedness of the initial value Problem \eqref{Cprob1}.
	The main idea is to apply the Fourier transform to both sides of the differential equation and then to reduce to a first system which can be analysed by the energy method.
	
	We have 
	\begin{equation}\label{EqFou}
	\partial_t^2\widehat{u}(t,\xi)+a(t)\sigma_\mathcal{-L}(\xi)\widehat{u}(t,\xi)=0,
	\end{equation}
	for any $[\xi]\in \widehat{G}$ fixed.
	In matrix components, the equation (\ref{EqFou}) can be written as
	\begin{equation}\label{EqFou2}
	\partial_t^2\widehat{u}_{mk}(t,\xi)+a(t)\nu_{m}^2(\xi)\widehat{u}_{mk}(t,\xi)=0,
	\end{equation}
	for $1\leq  m,k\leq d_\xi$.
	
	It is then natural to analyse the problem 
	\begin{equation}\label{EqFou3}
	\partial_t^2 \widehat{v}(t,\xi)+a(t)|\xi|_{\nu}^2\widehat{v}(t,\xi)=0,
	\end{equation}
	where for simplicity we denote $|\xi|_\nu:=\nu_m(\xi)$.
	Using the transformation \[V=\begin{pmatrix}
	V_1\\
	V_2
	\end{pmatrix}=\begin{pmatrix}
	i|\xi|_{\nu}\widehat{v}\\
	\partial_t\widehat{v}
	\end{pmatrix}  \]
	and taking \[A=\begin{pmatrix}
	0&1\\
	a(t)&0
	\end{pmatrix},\]  
	we obtain the first order linear differential equation
	\begin{equation}\label{1order}
	\partial_t V(t,\xi)=i|\xi|_\nu A(t)V(t,\xi)
	\end{equation}
	and the initial condition 
	\[V(0,\xi)=\begin{pmatrix}
	i|\xi|_{\nu}\widehat{v_0}(\xi)\\
	\widehat{v_1}(\xi)
	\end{pmatrix} .\]
	
	Now we look for a solution in the form $$V(t,\xi)=\frac{1}{\det(H(t))}e^{-\rho(t)\log|\xi|_\nu }H(t)W(t,\xi),$$ where $\rho\in C^{1}([0,T])$ is a real-valued function to be chosen later. Also $W=W(t,\xi)$ is to be determined.
	Take $\psi\in C^\infty_{c}(\mathbb{R})$, $\psi\geq 0$,  $\int_\mathbb{R}\psi=1$, and $\psi_\epsilon(t):=\frac{1}{\epsilon}\psi(\frac{t}{\epsilon})$. We take $H=H(t)$ to be
	\[H(t)=\begin{pmatrix}
	1&1\\
	\lambda_1(t)& \lambda_2(t)
	
	\end{pmatrix},\]
	
	with $\lambda_1(t)=:(-\sqrt{a}* \psi_\epsilon)$ and $\lambda_2(t)=:(\sqrt{a}* \psi_\epsilon)$, the mollified approximations for the coefficient $a(t)$.
	
	By substitution of  this in equation (\ref{1order}), we obtain that 
	
	$$e^{-\rho(t)\log|\xi|_\nu}(\det H)^{-1}H\partial_tW+e^{-\rho(t)\log|\xi|_\nu}(-\rho^{\prime}(t)\log(|\xi|_\nu))(\det H)^{-1}HW$$

	$$-e^{-\rho(t)\log|\xi|_\nu}\left(\frac{\partial_t \det H}{(\det H)^2}\right)HW+e^{-\rho(t)\log|\xi|_\nu}(\det H)^{-1}(\partial_t H)W$$
	
	$$=i|\xi|_\nu e^{-\rho(t)\log|\xi|_\nu}(\det H)^{-1}AHW.$$
	Multiplying both sides by $e^{\rho(t)\log|\xi|_\nu}(\det H)H^{-1}$ we obtain
	$$\partial_t W-\rho^\prime(t)\log|\xi|_\nu W-\frac{\partial_t \det H}{\det H}W+H^{-1}(\partial_t H)W=i|\xi|_\nu H^{-1}AHW.$$
	
	Now,
	$$\partial_t|W(t,\xi)|^2=2Re\langle\partial_t W(t,\xi),W(t,\xi)\rangle$$ 
	$$ = 2\rho^{\prime}(t)\log|\xi|_\nu|W(t,\xi)|^2+2\left(\frac{\partial_t \det H}{\det H}\right) |W(t,\xi)|^2$$ $$-2 H^{-1}\partial_t H|W(t,\xi)|^2 -|\xi|_\nu \langle H^{-1}AHW-(H^{-1}AH)^{*}W\rangle.$$
	Then, $$\left|\partial_t|W(t,\xi)\right|^2|=|2Re\langle\partial_t W(t,\xi),W(t,\xi)\rangle|$$ 
	$$ \leq 2\Big(|\rho^{\prime}(t)||\log|\xi|_\nu|+\left|\frac{\partial_t \det H}{\det H}\right|  \|H^{-1}\partial_t H\| $$ $$+|\xi|_\nu \| H^{-1}AHW-(H^{-1}AH)^{*}W\|\Big)W(t,\xi)|^2.$$
	We estimate, taking into account that the coefficient function $a=a(t)$ is Log-Lipschitz and $\sqrt{a(t)}\geq a_0>0$ for some $a_0$, the following terms:
	
	\begin{itemize}
		\item[(i)]$\frac{\partial_t \det H}{\det H}$
		\item[(ii)] $ \|H^{-1}\partial_t H\|$
		\item[(iii)] $\|H^{-1}AH-(H^{-1}AH)^{*}\|$
	\end{itemize}

	Observe that $\left|\frac{\partial_t \det H}{\det H}\right|=\left|\frac{\lambda_2^{\prime}(t)-\lambda_1^{\prime}(t)}{\lambda_2(t)-\lambda_1(t)}\right|\leq \frac{1}{2a_0}|\lambda_2^{\prime}(t)-\lambda_1^{\prime}(t)|$
	since $$|\lambda_2(t)-\lambda_1(t)|=|2\int_\mathbb{R}\sqrt{a(\tau)}\psi(\frac{t-\tau}{\epsilon})\epsilon^{-1}d\tau|\geq 2a_0.$$
	
	On the other hand $$|\lambda_2^{\prime}(t)-\lambda_1^{\prime}(t)|=2|\int_\mathbb{R}\sqrt{a(\tau)}\psi^{\prime}(\frac{t-\tau}{\epsilon})\epsilon^{-2}d\tau|$$ $$=2\epsilon^{-1}|\int_{\mathbb{R}}\sqrt{a(t-\epsilon s)}\psi^{\prime}(s)
	ds|$$ $$=2| \epsilon^{-1}\int_\mathbb{R}\left(\sqrt{a(t-\epsilon s)}-\sqrt{a(t)}\right)\psi^{\prime}(s)ds|.$$
	
	Notice that $$|\sqrt{a(t-\epsilon s)}-\sqrt{a(t)}|=\frac{|a(t-\epsilon s)-a(t)|}{\sqrt{a(t-\epsilon s)}+\sqrt{a(t)}}\leq \frac{1}{2a_0} \epsilon |s\log  (\epsilon s)| ,$$ and also that $\int_{\mathbb{R}}\sqrt{a(t)}\psi^{\prime}(s)ds=0$ since $\psi$ has compact support.
	
	We conclude that 
	\begin{equation}\label{i}
	\left|\frac{\partial_t \det H}{\det H}\right|\leq M_1|\log(\epsilon)|
	\end{equation}
	for $M_1>0$ constant. For example, if we take the support of the function $\psi(s)$ to be ${\rm supp}\, \psi\subset [1,2]$, and take $0<\epsilon<\frac{1}{4}$, then $$M_1\leq \frac{1}{a_0}\int_{1}^2|s\psi^\prime (s)|ds,$$
	because $|\log(\epsilon s)|<|\log \epsilon|$ for $\epsilon$ and $s$ satisfying the condition.
	
	To estimate $\|H^{-1}\partial_t H\|$, we compute directly:
	\[H^{-1}\partial_t H= \frac{1}{\lambda_2(t)-\lambda_1(t)}\begin{pmatrix}
	-\lambda_1^{\prime}(t)&-\lambda_2^{\prime}(t)\\
	\lambda_1^{\prime}(t)& \lambda_2^{\prime}(t)
	\end{pmatrix}.\]
	Now, we see that the matrix $H^{-1}\partial_t H$ is symmetric whose eigenvalues are $\beta_1=0$ and $\beta_2=2\lambda_2^\prime(t)$. Then we have $\|H^{-1}\partial_t H\|=\frac{1}{a_0}\lambda_2^\prime(t),$
	from which we obtain
	\begin{equation}\label{ii}
	\|H^{-1}\partial_t H\|\leq M_2|\log(\epsilon)|
	\end{equation}
	for $M_2=M_1=\frac{1}{a_0}\int_{1}^2|s\psi^\prime (s)|ds$.
	
	Now we compute $H^{-1}AH-(H^{-1}AH)^{*}$ 
	\[=\frac{1}{\lambda_2(t)-\lambda_1(t)}\begin{pmatrix}
	0&\lambda_2^{2}(t)+\lambda_1^{2}(t)-2a(t)\\ 
	-\lambda_2^{2}(t)-\lambda_1^{2}(t)+2a(t)&0
	\end{pmatrix}.\]
	We will estimate for $i=1,2$, $$|\lambda_i^2(t)-a(t)|=|\left(\epsilon^{-1}\int_{\mathbb{R}}\sqrt{a(s)}\psi(\frac{t-s}{\epsilon})ds\right)^2-a(t)|.$$ 
	We observe that
	$$|\left(\epsilon^{-1}\int_{\mathbb{R}}\sqrt{a(s)}\psi(\frac{t-s}{\epsilon})ds\right)^2-a(t)|$$ $$=|\left(\epsilon^{-1}\int_{\mathbb{R}}\sqrt{a(s)}\psi(\frac{t-s}{\epsilon})ds\right)^2-\left(\epsilon^{-1}\int_\mathbb{R}\sqrt{a(t)}\psi(\frac{t-s}{\epsilon})ds\right)^2|=$$
	
	$$|\left(\epsilon^{-1}\int_{\mathbb{R}}(\sqrt{a(s)}-\sqrt{ a(t)})\psi(\frac{t-s}{\epsilon})ds\right)\left(\epsilon^{-1}\int_{\mathbb{R}}(\sqrt{a(s)}+\sqrt{ a(t)})\psi(\frac{t-s}{\epsilon})ds\right)|.$$
	It is clear that $$\left(\epsilon^{-1}\int_{\mathbb{R}}(\sqrt{a(s)}+\sqrt{ a(t)})\psi(\frac{t-s}{\epsilon})ds\right)|\leq 2\|\sqrt{a(\cdot)}\|_\infty.|\epsilon^{-1}\int_{\mathbb{R}}\psi(\frac{t-s}{\epsilon})ds|$$ $$=2\|\sqrt{a(\cdot)}\|_\infty.$$
	
	Nevertheless, the first factor allows us to obtain another bound depending on $\epsilon$, in fact $$|\left(\epsilon^{-1}\int_{\mathbb{R}}(\sqrt{a(s)}-\sqrt{ a(t)})\psi(\frac{t-s}{\epsilon})ds\right)|$$
	$$=|\int_\mathbb{R}(\sqrt{a(t-\epsilon\tau)}-\sqrt{a(t)} )\psi(\tau)d\tau|\leq  \frac{1}{2a_0}\left(\int_{1}^2\tau\psi(\tau)d\tau\right) \epsilon |\log(\epsilon)|,$$
	if $\psi$ is as chosen above.
	Finally, taking into account the zeros in the anti-diagonal of the matrix we have that, the norm of the matrix  as an operator is precisely  $2|\lambda_2^2-a^2(t)|$. For this reason,
	\begin{equation}\label{iii}
	\|H^{-1}AH-(H^{-1}AH)^{*}\|\leq M_3\epsilon| \log(\epsilon)|
	\end{equation}
	for $M_3=\frac{1}{2a_0^2}\left(\int_{1}^2\tau\psi(\tau)d\tau\right)$.
	Applying the estimates (\ref{i}), (\ref{ii}), and (\ref{iii}) we obtain
	$$\partial_t|W(t,\xi)|^2\leq$$ $$ 2\left(\rho^{\prime}(t)\log|\xi|_{\nu}+M_1|\log(\epsilon)| +M_2|\log(\epsilon)|+M_3\epsilon|\xi|_\nu|\log(\epsilon)|\right)|W(t,\xi)|^2.$$
	By choosing $\epsilon:=|\xi|_\nu^{-1}<\frac{1}{2}$, we get 
	
	$$\partial_t|W(t,\xi)|^2\leq 2\left(\rho^{\prime}(t)\log|\xi|_{\nu}+(M_1+M_2+M_3)\log|\xi|_\nu\right)|W(t,\xi)|^2.$$
	
	We have, for $|\xi|_\nu \geq 1$, that $$\rho^{\prime}(t)\log|\xi|_{\nu}+(M_1+M_2+M_3)\log|\xi|_\nu\leq 0$$ provided that $\delta>M_1+M_2+M_3$  where
	we assume $\rho(t)=\rho(0)-\delta t$.
	
	We deduce that $\partial_t |W(t,\xi)|^2\leq 0$ under the conditions
	$|\xi_\nu|>2,$ and 
	\begin{equation}\label{delta}
	\delta>M_1+M_2+M_3=2\frac{1}{a_0}\int_{1}^2|\tau\psi^\prime (\tau)|d\tau+\frac{1}{2a_0^2}\left(\int_{1}^2\tau\psi(\tau)d\tau\right).
	\end{equation}
	This implies that $$|V(t,\xi)|=\exp(-\rho(t)\log|\xi|_\nu)\frac{1}{\det(H(t))}\|H(t)\||W(t,\xi)|$$ $$\leq \exp(-\rho(t)|\log|\xi|_\nu)\frac{1}{\det(H(t))}\|H(t)\||W(0,\xi)|$$
	
	$$=\exp((-\rho(t)+\rho(0))\log|\xi|_\nu)\frac{|\det(H(0))|}{|\det(H(t))|}\|H(t)\| \|H^{-1}(0)\||V(0,\xi)|.$$
	
	From this, we have that 
	\begin{equation}\label{Bound}
	|V(t,\xi)|\leq M_4|\xi_\nu|^{\delta T}|V(0,\xi)|
	\end{equation}
	for  $M_4>0 $.
	This implies that $$|\xi|_\nu^2|\widehat{v}|^2+|\partial_t \widehat{v}|^2\leq M_4^2|\xi|_\nu^{2\delta T}\left(|\xi|_\nu^2|\widehat{v_0}|^2+| \widehat{v_1}|^2\right)$$
	
	Coming back to the functions $u_{mk}$, we obtain
	
	\begin{equation}\label{estimate}
	\nu_m^2|\widehat{u}_{mk}|^2+|\partial_t\widehat{u}_{mk}|^2\leq M_4^2\nu_m^{\delta T}\left(\nu_m^2|\widehat{u}_{0mk}|^2+|\widehat{u}_{1mk}|^2\right).
	\end{equation}
	Multiplying both sides by $\nu_m^{2s -2-\delta T}$ we get
	\begin{equation}
	\nu_m^{2s-\delta T}|\widehat{u}_{mk}|^2+\nu_m^{2s -2-\delta T}|\partial_t\widehat{u}_{mk}|^2\leq M_4^2\left(\nu_m^{2s}|\widehat{u}_{0mk}|^2+ \nu_m^{2s-2}|\widehat{u}_{1mk}|^2\right),
	\end{equation}
	which says that
	$$ \|u(t,\cdot)\|_{H^{s-\frac{\delta}{2} T}_{\mathcal{L}}}^2+\|\partial_t u(t,\cdot)\|_{H^{s-\frac{\delta}{2} T-1}_{\mathcal{L}}}\leq C( \|u_0(t,\cdot)\|_{H^{s}_{\mathcal{L}}}^2)+ \|u_1(t,\cdot)\|_{H^{s-1}_{\mathcal{L}}}^2 ,$$
	
	proving Theorem 1.1.
	

	\section{Graded Lie groups}\label{graded lie group case}
	
	In this section we use the Fourier analysis on graded Lie groups, see e.g. \cite{FRNILP} and \cite{NR}, to analyse the Cauchy Problem \eqref{Cprob2}.
	A Lie group $G$ is called graded if its Lie algebra $\mathfrak{g}$ can be decomposed in the form $$\mathfrak{g}=\bigoplus_{i=1}^{k}\mathfrak{g}_i,$$ such that $[\mathfrak{g}_i,\mathfrak{g}_j]\subset \mathfrak{g}_{i+j}$ and $\mathfrak{g}_{i+j}=\{0\}$ for $i+j>k$.
	
	The gradation induces a homogeneous structure on $\mathfrak{g}$ by the dilations $D_r:={\rm Exp}(A\log r),$ where $A: \mathfrak{g}\to \mathfrak{g}$ is a diagonalisable operator acting by $AX=jX$ for $X\in \mathfrak{g}_j$.
	Notice that each $D_r$ is morphism of the Lie algebra $\mathfrak{g}$ for all $r>0$. 
	With this algebraic structure it is possible to extend   the basics of Fourier analysis in $\mathbb{R}^n$ to graded Lie groups $G$. 
	
	We start with $\pi$  a representation of $G$ on the separable Hilbert space $H_\pi$. A vector $v \in H_\pi$
	is said to be smooth or of type $C^\infty$ if the function
	$$G\ni x \mapsto  \pi(x)v \in H_\pi$$ is smooth. The vector space of smooth vectors of a representation $\pi$ is denoted by $H_\pi^\infty$.
	For a function $f\in S(G)=\{f: f\circ \exp_G \in S(\mathfrak{g})\}$ in the Schwartz space of $G$, the Fourier transform evaluated at $\pi \in \widehat{G}$ is the operator acting on $H_\pi$ defined by $$\mathcal{F}_G(f)(\pi):=\widehat{f}(\pi):=\pi(f)=\int_G f(x)\pi(x)^{*}dx.$$
	Let $\mathfrak{g}$ be the Lie algebra of $G$. For every $X\in\mathfrak{g}$,  $v \in H_\pi^\infty$ smooth, and for a given $\pi \in \widehat{G}$, we recall the definition of the infinitesimal representation 
	$$d\pi(X)v := \lim_{t\to 0}\frac{1}{t}\left(\pi(exp_G(tx))v-v\right),$$
	which is a representation of $\mathfrak{g}$ on $H_\pi^\infty$. 
	
	According to the Poincar\'{e}-Birkhoff-Witt Theorem,
	any left-invariant differential operator T on $G$, can be written in a unique way as a finite sum
	$$T =\sum_{|\alpha|\leq M}c_\alpha X^\alpha$$
	where all but finitely many of the coefficients $c_\alpha\in \mathbb{C}$ are zero and $X^\alpha= X_1 . . . X_{|\alpha|}$, for
	$X_i \in \mathfrak{g}$. This allows one to look at any left-invariant differential operator $T$
	on G as an element of the universal enveloping algebra $U(\mathfrak{g})$ of the Lie algebra of $G$. In this case the symbol of the operator $T$ is the family of infinitesimal representations $$\{d\pi(T):=\pi(T)| \pi \in \widehat{G}\}.$$
	A linear operator $T:C^\infty(G)\rightarrow D^\prime(G)$ is homogeneous of  degree $\nu\in \mathbb{C}$ if for every $r>0$ the equality 
	\begin{equation*}
	T(f\circ D_{r})=r^{\nu}(Tf)\circ D_{r}
	\end{equation*}
	holds for every $f\in \mathcal{D}(G). $
	A Rockland operator is a left-invariant differential operator $\mathcal{R}$ which is homogeneous of positive degree $\nu=\nu_{\mathcal{R}}$ and such that, for every unitary irreducible non-trivial representation $\pi\in \widehat{G},$ $\pi(\mathcal{R})$ is injective on $\mathcal{H}_{\pi}^{\infty};$ $\sigma_{\mathcal{R}}(\pi)=\pi(\mathcal{R})$ is the symbol associated to $\mathcal{R}.$
	
	Hulanicki, Jenkins
	and Ludwig showed in \cite{HJL} that the spectrum of $\pi(\mathcal{R})$, with $\pi \in \widehat{G} \setminus \{1\}$, is discrete
	and lies in $(0, \infty)$. Any Rockland operator is a Fourier multiplier and we have that $$\mathcal{F}(\mathcal{R}f)(\pi)=\pi(\mathcal{R})\widehat{f}(\pi),$$
	where $\pi(\mathcal{R})={\rm Diag}(\pi_1^2, \pi_2^2,...)$ with $\pi_i\in \mathbb{R}^{+}$ for $i\in \mathbb{N}$.

	For every $\pi\in \widehat{G}, $ the Kirillov trace character $\Theta_\pi$ defined by  $$(\Theta_{\pi},f):
	={\rm Tr}(\widehat{f}(\pi)),$$ is a tempered distribution on $S(G)$. The identity
	$f(e_G)=\int\limits_{\widehat{G}}(\Theta_{\pi},f)d\pi$,
	implies the Fourier inversion formula $f=\mathcal{F}_G^{-1}(\widehat{f}),$ where
	\begin{equation*}
	(\mathcal{F}_G^{-1}\sigma)(x):=\int\limits_{\widehat{G}}\textnormal{\textbf{Tr}}(\pi(x)\sigma(\pi))d\pi,\,\,x\in G,\,\,\,\,\mathcal{F}_G^{-1}:S(\widehat{G})\rightarrow S(G),
	\end{equation*} is the inverse Fourier  transform. In this context, the Plancherel theorem takes the form $\Vert f\Vert_{L^2(G)}=\Vert \widehat{f}\Vert_{L^2(\widehat{G})}$,  where  $$L^2(\widehat{G}):=\int\limits_{\widehat{G}}H_\pi\otimes H_{\pi}^*d\mu(\pi),$$ is the Hilbert space endowed with the norm $$\Vert \sigma\Vert_{L^2(\widehat{G})}=(\int_{\widehat{G}}\Vert \sigma(\pi)\Vert_{\textnormal{HS}}^2d\pi)^{\frac{1}{2}},$$ with $d\mu(\pi)$ denoting the Plancherel measure on $\widehat{G}.$

	It can be shown that a Lie group $G$ is graded if and only if there exists a differential Rockland operator on $G.$ If the Rockland operator is formally self-adjoint, then $\mathcal{R}$ and $\pi(\mathcal{R})$ admit self-adjoint extensions on $L^{2}(G)$ and $\mathcal{H}_{\pi},$ respectively.

	\begin{defi}
		Let $G$ be a graded
		Lie group and let $\mathcal{R}$ be a positive Rockland operator of homogeneous degree $\nu$. 
		The Sobolev space $H^s_{\mathcal{R}}(G)$ is the subspace of $S^\prime(G)$ obtained by completion of the Schwartz space $S(G)$ with respect to the Sobolev norm
		
		$$\|f\|_{H^s_{\mathcal{R}}(G)}:=\|(1+\pi(\mathcal{R}))^{\frac{s}{\nu}}f\|_{L^2(G)}$$
	\end{defi}

	Now we have the necessary tools to  study the Problem \eqref{Cprob2}. These spaces have been extensively 
	analysed in \cite{FRNILP} and \cite{FRSOBV}.

	\begin{teo}\label{teo2}
		
		Consider the Problem (\ref{Cprob2}) where $\mathcal{R}$ is  a Rockland operator with homogeneous degree $\nu$. For initial data $(u_0,u_1)\in H^{s}\times H^{s-\frac{\nu}{2}}$, the problem is well posed and the solution $u$ satisfy that
		$$\|u\|_{H^{s-\frac{\nu \delta T}{4} }}^2+\|\partial_tu\|_{H^{s-\frac{\nu \delta T}{4} -\frac{\nu}{2}}}   ^{2} \leq M (\|u_0\|_{H^{s}}^2+\|u_1\|^2_{H^{s-\frac{\nu}{2}}}),$$
		for some  constants $\delta, M>0$.
	\end{teo}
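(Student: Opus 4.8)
The plan is to transcribe the proof of Theorem~\ref{teop} almost verbatim, the only structural change being that the Peter--Weyl sum over $\widehat{G}$ is replaced by the Plancherel integral over the unitary dual of the graded group $G$, and that $\sigma_{-\mathcal L}(\xi)$ is replaced by the operator symbol $\pi(\mathcal R)$; note that here $+a(t)\mathcal R$ plays exactly the role that $-a(t)\mathcal L$ played before, since both $\mathcal R$ and $-\mathcal L$ are positive. First I would apply the group Fourier transform to \eqref{Cprob2}, obtaining for $\mu$-a.e.\ $\pi\in\widehat G$
\begin{equation*}
\partial_t^2\widehat u(t,\pi)+a(t)\,\pi(\mathcal R)\,\widehat u(t,\pi)=0 .
\end{equation*}
By \cite{HJL} the operator $\pi(\mathcal R)$ is positive, self-adjoint and has discrete spectrum in $(0,\infty)$, so after a unitary change of orthonormal basis in $H_\pi$ we may assume $\pi(\mathcal R)=\mathrm{Diag}(\lambda_1(\pi),\lambda_2(\pi),\dots)$ with $\lambda_j(\pi)>0$, and the equation decouples into the scalar family
\begin{equation*}
\partial_t^2\widehat v(t)+a(t)\,\lambda\,\widehat v(t)=0,\qquad \lambda=\lambda_j(\pi),
\end{equation*}
which is precisely \eqref{EqFou3} with $|\xi|_\nu^2$ replaced by $\lambda$.

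Next, setting $V=(i\lambda^{1/2}\widehat v,\ \partial_t\widehat v)^{\mathrm T}$ reduces this scalar equation to the first order system $\partial_t V=i\lambda^{1/2}A(t)V$ with $A(t)=\left(\begin{smallmatrix}0&1\\ a(t)&0\end{smallmatrix}\right)$, and I would look for $V$ in the form $V(t)=\tfrac{1}{\det H(t)}\,\lambda^{-\rho(t)/2}\,H(t)W(t)$ with $\rho(t)=\rho(0)-\delta t$ and $H(t)$ the same matrix of mollifications $\lambda_1=-\sqrt a*\psi_\epsilon$, $\lambda_2=\sqrt a*\psi_\epsilon$ used in Section~\ref{compact lie group case}. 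The three estimates \eqref{i}, \eqref{ii}, \eqref{iii} for $\bigl|\partial_t\det H/\det H\bigr|$, $\|H^{-1}\partial_t H\|$ and $\|H^{-1}AH-(H^{-1}AH)^{*}\|$ carry over word for word, since they involve only the Log-Lipschitz function $a$, the lower bound $a\ge a_0>0$ and the fixed mollifier $\psi$ — nothing about the group. Choosing $\epsilon=\lambda^{-1/2}$ (legitimate once $\lambda$ exceeds a fixed threshold) and then $\delta$ as in \eqref{delta} yields $\partial_t|W(t,\pi)|^2\le 0$, hence a pointwise bound of the form
\begin{equation*}
\lambda\,|\widehat v(t)|^2+|\partial_t\widehat v(t)|^2\ \le\ C\,\lambda^{\kappa\delta T}\bigl(\lambda\,|\widehat v(0)|^2+|\partial_t\widehat v(0)|^2\bigr)
\end{equation*}
for a universal $\kappa>0$. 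For the finitely many bounded eigenvalues $\lambda$ the same bound (with $\lambda^{\kappa\delta T}$ replaced by a constant) is immediate from Gronwall's inequality applied to $\partial_t V=i\lambda^{1/2}A(t)V$, whose coefficients lie in $L^\infty([0,T])$.

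It then remains to reassemble. Restoring the matrix coefficients of $\widehat u(t,\pi)$, multiplying the pointwise inequality by the appropriate power $(1+\lambda_j(\pi))^{(2s-\nu)/\nu}$ and integrating $\int_{\widehat G}\sum_{j,k}(\,\cdots\,)\,d\mu(\pi)$, the Plancherel theorem $\|f\|_{L^2(G)}=\|\widehat f\|_{L^2(\widehat G)}$ together with the Fourier description $\|f\|_{H^s_{\mathcal R}}=\|(1+\pi(\mathcal R))^{s/\nu}f\|_{L^2(G)}$ turns it into
\begin{equation*}
\|u(t,\cdot)\|_{H^{s-\frac{\nu\delta T}{4}}_{\mathcal R}}^2+\|\partial_t u(t,\cdot)\|_{H^{s-\frac{\nu\delta T}{4}-\frac{\nu}{2}}_{\mathcal R}}^2\ \le\ M\bigl(\|u_0\|_{H^{s}_{\mathcal R}}^2+\|u_1\|_{H^{s-\frac{\nu}{2}}_{\mathcal R}}^2\bigr);
\end{equation*}
here the homogeneous degree $\nu$ of $\mathcal R$ is exactly what converts the polynomial growth $\lambda^{\kappa\delta T}$ in the eigenvalue into a finite loss $\tfrac{\nu\delta T}{4}$ on the $\mathcal R$-Sobolev scale, because the $H^s_{\mathcal R}$-weight is $\lambda^{s/\nu}$. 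Existence of $u$ follows by performing this construction for every $\pi$ and applying the inverse Fourier transform, and uniqueness together with continuous dependence follows by applying the estimate to the difference of two solutions, equivalently from the uniqueness of the solution of each decoupled second order ODE.

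The argument carries no genuinely new difficulty beyond Theorem~\ref{teop}; the points needing care are: (a) justifying the term-by-term diagonalisation of $\pi(\mathcal R)$ and the reduction to scalar ODEs when $H_\pi$ is infinite dimensional, which is routine given that $\pi(\mathcal R)$ has discrete spectrum; (b) treating the bounded-eigenvalue part separately by Gronwall; and (c) the exact bookkeeping of the Sobolev exponents in terms of $\nu$, which I expect to be the only slightly delicate step. Everything else is a transcription of the compact-group computation.
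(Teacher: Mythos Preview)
Your proposal is correct and follows essentially the same approach as the paper: apply the group Fourier transform, diagonalise $\pi(\mathcal R)$, reduce to the scalar ODE, and transcribe the energy argument of Section~\ref{compact lie group case} with $|\xi|_\nu$ replaced by $\lambda^{1/2}$, then reassemble via the Plancherel formula and the Fourier description of $H^s_{\mathcal R}$. If anything, you are slightly more careful than the paper, which does not separately mention the bounded-eigenvalue regime handled by Gronwall or the issue of $H_\pi$ being infinite dimensional.
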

	
	\begin{proof}
		
		We apply the group Fourier transform to both sides of the equation to obtain
		$$\partial^2 _t \widehat{u}(t,\pi)+a(t)\pi(\mathcal{R})\widehat{u}(t,\pi)=0.$$
		
		Following the lines of Section \ref{compact lie group case}, and taking into account the diagonal form of the symbol of the Rockland operator $\mathcal{R}$:
		write $\widehat{u}(t,\pi)=[\widehat{u}_{mk}]$, also for the initial data $\widehat{u}_0(t,\pi)=[\widehat{u_0}_{mk}]$, $\widehat{u}_1(t,\pi)=[\widehat{u_1}_{mk}]$. For the eigenvalues of the symbol of the Rockland Operator $\mathcal{R}$, we write $\pi_m$.
		
		The idea is to analyse the first order linear differential equation \begin{equation}\label{1order}
		\partial_t V(t,\xi)=i|\xi|_\pi A(t)V(t,\xi)
		\end{equation}
		with initial condition 
		\[V(0,\xi)=\begin{pmatrix}
		i|\xi|_{\pi}\widehat{v_0}(\xi)\\
		\widehat{v_1}(\xi)
		\end{pmatrix} .\]
		
		We look for a solution in the form $$V(t,\xi)=\frac{1}{\det(H(t))}e^{-\rho(t)\log|\xi|_\pi }H(t)W(t,\xi),$$ where $\rho\in C^{1}([0,T])$ is a real valued function to be chosen later. For $\psi\in C^\infty_{c}(\mathbb{R})$, $\psi\geq 0$,  $\int_\mathbb{R}\psi=1$, and $\psi_\epsilon(t):=\frac{1}{\epsilon}\psi(\frac{t}{\epsilon})$, we take $H=H(t)$ to be
		\[H(t)=\begin{pmatrix}
		1&1\\
		\lambda_1(t)& \lambda_2(t)
		
		\end{pmatrix},\]
		
		with $\lambda_1(t)=:(-\sqrt{a}* \psi_\epsilon)$ and $\lambda_2(t)=:(\sqrt{a}* \psi_\epsilon)$, the mollified approximations for the coefficient function $a(t)$. Using the estimates from Section \ref{compact lie group case} we arrive to, in complete analogy with equation \eqref{estimate}:

		\begin{equation}\label{estimater}
		\pi_m^2|\widehat{u}_{mk}|^2+|\partial_t\widehat{u}_{mk}|^2\leq M\left(\pi_m^{2+\delta T}|\widehat{u_0}_{mk}|^2+\pi^{\delta T}_m|\widehat{u_1}_{mk}|^2\right),
		\end{equation}
		for some $\delta>0$.
		In order to deduce the estimate for functions in the Sobolev spaces, multiplying by $\pi_m^{\frac{4s}{\nu}-\delta T-2}$ we obtain

		\begin{equation}\label{estimater2}
		\pi_m^{\frac{4s}{\nu}-\delta T}|\widehat{u}_{mk}|^{2} +\pi_m^{\frac{4s}{\nu}-\delta T-2}|\partial_t\widehat{u}_{mk}|^2\leq M\left(\pi_m^{\frac{4s}{\nu}}|\widehat{u_0}_{mk}|^2+\pi^{\frac{4s}{\nu}-2}_m|\widehat{u_1}_{mk}|^2\right).
		\end{equation}
		Recall that  $$\|\mathcal{F}\{(1+\pi(\mathcal{R}))^{\frac{s}{\nu}}u\}\|_{HS}^2=\sum_m (1+\pi_m^2)^\frac{2s}{\nu}\sum_j|\widehat{u}_{mj}|^2.$$
		
		Then we can see that  $u \in H^s_{\mathcal{R}}$ is characterised by  
		
		$$ \int_{\widehat{G}} \sum_m (\pi_m^2)^\frac{2s}{\nu}\sum_j|\widehat{u}_{mj}|^2d\mu(\pi)<\infty.$$ 
		Taking this into account we have that 
		$$\|\pi(\mathcal{R})^{\frac{s-\frac{\delta \nu}{4}T}{\nu}}\widehat{u}\|_{HS}^2+\|\pi(\mathcal{R})^{\frac{s-\frac{\delta \nu}{4}T-\frac{\nu}{2}}{\nu}}\partial_t\widehat{u}\|_{HS}^2$$
		$$\leq M\left(\|\pi(\mathcal{R})^{\frac{s}{\nu}}\widehat{u_0}\|_{HS}^2+\|\pi(\mathcal{R})^{\frac{s-\frac{\nu}{2}}{\nu}}\widehat{u}_1\|_{HS}^2\right).$$
		after integration with respect to the Plancherel 
		measure on $\widehat{G}$ on both sides we obtain the proof of the Theorem.
	\end{proof}
	
	\begin{remark}
		In \cite{RothSTein} and \cite{GR} we can find a result on embedding between the Sobolev spaces: the spaces $\mathcal{H}^s_\mathcal{L}:=\{f| (1-\mathcal{L})^{\frac{s}{2}}f \in L^2(G)\}$ where the operator $\mathcal{L}=X_1^2+...+X_k^2$ is a sum of squares of left invariant vector fields satisfying ${\rm H\ddot{o}rmander}$ condition of length $l$,  and the classical ones  $\mathcal{H}^s:=\{f| (1-\Delta)^{\frac{s}{2}}f\in L^2(G)\}$ associated to the Laplace operator $\Delta$. Indeed, $$\mathcal{H}^{s}\subset \mathcal{H}_{\mathcal{L}}^{s}\subset \mathcal{H}^{\frac{s}{l}}.$$
		
		From this we can deduce the well-posedness assuming data in  $\mathcal{H}_{\mathcal{L}}^{s}$ and obtaining solution in classical Sobolev spaces.
	\end{remark}

	\section{An extension to Hilbert spaces }\label{an extension to Hilbert spaces}
	We observe that the technique applied in the sections above can be used to study the problem \eqref{Cprob1} in the context that $u(t)\in \mathcal{H}$ where $\mathcal{H}$  is a separable Hilbert space. 
	
	Let $(e_j)_{j\in \mathbb{N}} \subset \mathcal{H}$ be an orthonormal basis. We define the Fourier transform of the element $u\in  \mathcal{H}$ by $\widehat{u}(j):=\langle u,e_j \rangle_{ \mathcal{H}}$. Clearly $$u=\sum_{j\in \mathbb{N}} \widehat{u}(j)e_j.$$
	
	Consider an operator $\mathcal{A}:\mathcal{D}(\mathcal{A})\subset\mathcal{H}\to\mathcal{H}$
	which acts on $u$ as 
	\begin{equation}
	\mathcal{A}u=\sum_{j\in \mathbb{N}} \lambda_j^2  \widehat{u}(j)e_j,
	\end{equation}
	
	for a sequence of real numbers $(\lambda_j^2)_{j\in \mathbb{N}}$. Notice that $\widehat{\mathcal{A}u}(j)=\lambda_j^2 \widehat{u}(j)$.
	We proceed to define the induced Sobolev spaces $\mathcal{H}^s_\mathcal{A}$ by:
	\begin{itemize}
		\item $ \mathcal{H}^0_\mathcal{A}:=\mathcal{H}$.
		\item $ \mathcal{H}^s_\mathcal{A}:=\{u\in  \mathcal{H}:\displaystyle \sum_{j\in \mathbb{N}} \lambda_j^{2s}|\widehat{u}(j)|^2<\infty\},$ for $s\in \mathbb{R}$.
	\end{itemize}
	
	Now we have introduced analogous tools of the Fourier analysis. We establish the version of problem \eqref{Cprob1}  in this setting. For $a:[0,T]\to \mathbb{R}$ being a Log-Lipschitz function, consider the problem of finding a function $u:[0,T]\to \mathcal{H}$ such that

	\begin{equation}\label{Cprob3}
	\begin{cases}
	\partial^2 _t u(t)-a(t)\mathcal{A}u(t)&=0,\quad
	t\in [0,T], \\
	u(0)&=u_0,\\
	\partial_tu(0)&= u_1,
	\end{cases}
	\end{equation}
	the initial data is in suitable Sobolev spaces.

	\begin{teo}\label{teop3}
		Let $a:[0,T]\to \mathbb{R}$ be a Log-Lipschitz function such that $a(t)>a_0>0$. Suppose  $(u_0,u_1)\in H^{s}_{\mathcal{A}}(G)\times H^{s-1}_{\mathcal{A}}(G)$ for some $s\in \mathbb{R}$. Then the Cauchy problem \eqref{Cprob3} has a unique solution satisfying
		$$ \|u(t,\cdot)\|_{H^{s-\frac{\delta}{2} T}_{\mathcal{A}}}^2+\|\partial_t u(t,\cdot)\|_{H^{s-\frac{\delta}{2} T-1}_{\mathcal{A}}}\leq C( \|u_0\|_{H^{s}_{\mathcal{A}}}^2+ \|u_1\|_{H^{s-1}_{\mathcal{A}}}^2) ,$$
		for some $C, \delta >0$ independent of $u_0, u_1$, and $t\in [0,T]$.
	\end{teo}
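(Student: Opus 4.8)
The plan is to reproduce, in the abstract setting, the argument used for Theorem \ref{teop}: the only features of the compact group exploited there were the diagonalisation of $\mathcal{L}$ and the Plancherel identity, and both are available here by the very definitions of $\widehat u(j)=\langle u,e_j\rangle_{\mathcal H}$ and of $\mathcal A$. First I would apply the Fourier transform in the basis $(e_j)_{j\in\mathbb N}$ to \eqref{Cprob3}; since $\widehat{\mathcal A u}(j)=\lambda_j^2\widehat u(j)$, the problem decouples into the scalar Cauchy problems
\begin{equation*}
\partial_t^2\widehat u(t,j)+a(t)\,\lambda_j^2\,\widehat u(t,j)=0,\qquad \widehat u(0,j)=\widehat u_0(j),\quad \partial_t\widehat u(0,j)=\widehat u_1(j),
\end{equation*}
one for each $j\in\mathbb N$. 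As $a$ is Log-Lipschitz, hence continuous and bounded on $[0,T]$, each scalar problem has a unique $C^2$ solution, so the candidate solution of \eqref{Cprob3} is $u(t)=\sum_j\widehat u(t,j)e_j$ and the whole matter reduces to a bound for $\widehat u(t,j)$ and $\partial_t\widehat u(t,j)$ in terms of the data, uniform in $j$ and in $t\in[0,T]$.

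For the high modes, say those with $\lambda_j>2$, I would run \emph{verbatim} the computation of Section \ref{compact lie group case}: pass to the first order system for $V=(i\lambda_j\widehat u,\partial_t\widehat u)^{\mathrm T}$, look for $V$ in the form $V(t,j)=\det(H(t))^{-1}e^{-\rho(t)\log\lambda_j}H(t)W(t,j)$ with $H$ built from the mollified approximations $\mp(\sqrt a*\psi_\epsilon)$ and $\rho(t)=\rho(0)-\delta t$, and estimate the three quantities $\partial_t\det H/\det H$, $\|H^{-1}\partial_t H\|$ and $\|H^{-1}AH-(H^{-1}AH)^{*}\|$ exactly as in \eqref{i}, \eqref{ii}, \eqref{iii}, using only $a\ge a_0>0$ and \eqref{loglip}. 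Choosing $\epsilon=\lambda_j^{-1}$ (admissible since $\lambda_j>2$) and $\delta>M_1+M_2+M_3$ as in \eqref{delta} makes $\partial_t|W(t,j)|^2\le 0$, whence $|V(t,j)|\le M_4\lambda_j^{\delta T}|V(0,j)|$, that is, the analogue of \eqref{estimate},
\begin{equation*}
\lambda_j^2|\widehat u(t,j)|^2+|\partial_t\widehat u(t,j)|^2\le M_4^2\,\lambda_j^{\delta T}\bigl(\lambda_j^2|\widehat u_0(j)|^2+|\widehat u_1(j)|^2\bigr).
\end{equation*}
For the remaining modes, $0\le\lambda_j\le 2$, the coefficient $a(t)\lambda_j^2$ is bounded by $4\|a\|_{L^\infty[0,T]}$, so an elementary energy estimate together with Gronwall's inequality gives a bound of the same shape with a constant depending only on $\|a\|_\infty$ and $T$ (and when $\lambda_j=0$ the solution is simply affine in $t$); merging the two frequency ranges yields one inequality with a single constant $C=C(a_0,\|a\|_\infty,T,\psi)$.

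Finally I would multiply the pointwise estimate by $\lambda_j^{2s-2-\delta T}$, obtaining
\begin{equation*}
\lambda_j^{2s-\delta T}|\widehat u(t,j)|^2+\lambda_j^{2s-2-\delta T}|\partial_t\widehat u(t,j)|^2\le C\bigl(\lambda_j^{2s}|\widehat u_0(j)|^2+\lambda_j^{2s-2}|\widehat u_1(j)|^2\bigr),
\end{equation*}
and sum over $j\in\mathbb N$. By the definition of $\mathcal H^\sigma_{\mathcal A}$ the right-hand side sums to $C(\|u_0\|_{\mathcal H^s_{\mathcal A}}^2+\|u_1\|_{\mathcal H^{s-1}_{\mathcal A}}^2)$, which is finite by hypothesis; this simultaneously shows that the series $\sum_j\widehat u(t,j)e_j$ and $\sum_j\partial_t\widehat u(t,j)e_j$ converge in $\mathcal H^{s-\delta T/2}_{\mathcal A}$ and $\mathcal H^{s-\delta T/2-1}_{\mathcal A}$ respectively, so $u$ solves \eqref{Cprob3} in the required class, and it is exactly the claimed energy inequality. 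Uniqueness follows either from the scalar uniqueness applied mode by mode or by applying the inequality to the difference of two solutions with vanishing data. The step I expect to be the only real obstacle is the bookkeeping at the two ends of the spectrum — ensuring that the high-mode estimate (which needs $\epsilon=\lambda_j^{-1}<\tfrac12$) and the low-mode Gronwall estimate are glued into a single bound with constants independent of $j$ and $t$; once that is arranged, the summation and the identification of the limit as the solution are routine.
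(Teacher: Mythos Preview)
Your proposal is correct and follows essentially the same route as the paper: apply the ``Fourier transform'' in the basis $(e_j)$ to reduce \eqref{Cprob3} to the scalar equations $\partial_t^2\widehat u(t,j)+a(t)\lambda_j^2\widehat u(t,j)=0$, which are exactly \eqref{EqFou3}, and then invoke the analysis of Section~\ref{compact lie group case}. In fact you are more thorough than the paper, which simply cites Theorem~\ref{teop} at that point, whereas you explicitly split off the low modes $\lambda_j\le 2$ via Gronwall and check that the resulting series converge in the claimed Sobolev spaces.
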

	
	\begin{proof}
		Taking Fourier transform on both sides of the equation, we obtain for each $k\in \mathbb{N}$,
		$$\langle \partial_t^2 u(t), e_k\rangle -a(t)\langle \mathcal{A}u(t),e_k\rangle =0,$$
		or
		$$\partial_t^2\widehat{u(t)}(k)- a(t)\lambda_k^2 \widehat{u(t)}(k)=0.$$  After denoting $\widehat{u(t)}=\beta(t)$, we deal with the equation
		
		$$\partial_t^2 \beta(t)- a(t)\lambda_k^2 \beta(t)=0.$$
		
		This is clearly the equation \eqref{EqFou3}. The conclusions obtained in Theorem \ref{teop} are then valid in this case. 
	\end{proof}


\begin{thebibliography}{20}
		\bibitem{CDR} D. Cardona, J. Delgado, and M. Ruzhansky, $L^p$ Bounds for Pseudo-Differential Operators on Graded Lie Groups, https://arxiv.org/pdf/1911.03397.pdf, 2019.
		
		
		\bibitem{CSM}F. Clombini, D. Del Santo,
		F. Fanelli, and G. Métivier, Time-Dependent Loss of Derivatives for
		Hyperbolic Operators with Non
		Regular Coefficients, Communications in Partial Differential Equations, 38: 1791–1817, 2013.
		
		
		\bibitem{CDS}  F. Colombini, E. De Giorgi, S. Spagnolo, Sur les équations hyperboliques avec des coefficients qui ne dépendent que du temps, Ann. Sc. Norm. Super. Pisa, Cl. Sci. 6 (1979) 511–559.
		
		\bibitem{CJS} F. Colombini, E. Janelli, S. Spagnolo, Nonuniqueness in hyperbolic Cauchy problems, Ann. Math. 126(3) (1987) 495–524.
		\bibitem{CS}  F. Colombini, S. Spagnolo, An example of a weakly hyperbolic Cauchy problem not well posed in $C^{\infty}$, Acta Math. 148 (1982) 243–253.
		
		
		
		\bibitem{FLL} G.B. Folland, Subelliptic estimates and function spaces on nilpotent Lie groups, Ark. Mat. 13(2) (1975) 161–207.
		\bibitem{FRNILP} V. Fischer, M. Ruzhansky, Quantization on Nilpotent Lie Groups, Progress in Mathematics, vol.314, Birkhäuser/Springer, 2016, [Open access book].
		\bibitem{FRSOBV} V. Fischer, M. Ruzhansky, Sobolev spaces on graded groups, Ann. Inst. Fourier 67 (2017) 1671–1723.
		
		\bibitem{GR}C. Garetto, M. Ruzhansky. On the well-posedness of weakly hyperbolic equations with time dependent coefficients, J. Differential Equations, 253:1317–1340, 2012.
		
		\bibitem{GRwave} C. Garetto, M. Ruzhansky. Wave equation for sum of squares on compact Lie groups, J. Differential Equations, 258:4324–4347, 2015.
		\bibitem{GRhiperb} C. Garetto, M. Ruzhansky, Hyperbolic second order equations with non-regular time dependent coefficients, Arch. Ration. Mech. Anal. 217(1) (2015) 113–154.
		
		\bibitem{HC} H. Bae, M. Cannone,  Log-Lipschitz regularity of the 3D Navier–Stokes equations, Nonlinear Analysis 135 (2016) 223–235.
		
		
		\bibitem{HJL} A. Hulanicki, J.W. Jenkins, J. Ludwig, Minimum eigenvalues for positive, Rockland operators, Proc. Am. Math. Soc. 94 (1985) 718–720.
		
		\bibitem{NR} M. Ruzhansky, N. Yessirkegenov, Very weak solutions to hypoelliptic wave equations, Journal of Differential Equations
		268 (2020), 2063-2088.
		
		\bibitem{Simon}  M. Reed, B. Simon, Methods of Modern Mathematical Physics, revised and enlarged edition, Functional Analysis, vol.1, Academic Press, 1980.
		
		\bibitem{RothSTein} L.P. Rothschild, E.M. Stein, Hypoelliptic differential operators and nilpotent groups, Acta Math. 137 (1976) 247–320.
		
		
		\bibitem{RTar} M. Ruzhansky, C. Taranto, Time-dependent wave equations on graded groups, arXiv:1705.03047, 2017.
		
		\bibitem{RT}M. Ruzhansky, V. Turunen. Pseudo-Differential Operators and Symmetries: Background Analysis and Advanced Topics, Basel, Birkh${\rm \ddot{a}}$user, 2009.
		
		
		
		
		
		
		
		
		
	\end{thebibliography}
\end{document}